\newtheorem{theorem}{Theorem}
\newtheorem{lemma}[theorem]{Lemma}
\newtheorem{corollary}[theorem]{Corollary}
\newcommand{\R}{\mathbb R}
\newcommand{\C}{\mathfrak c}
\newcommand{\w}{\omega}
\newcommand{\Li}{\mathcal{L}}
\author{Szymon Głąb}
\address{Institute of Mathematics, Lodz University of Technology, Wólczańska 215, 93-005 Łódź, Poland}
\email {szymon.glab@p.lodz.pl}
\author{Mateusz Lichman}
\address{Institute of Mathematics, Lodz University of Technology, Wólczańska 215, 93-005 Łódź, Poland}
\email {mateusz.lichman@wp.pl}
\author{Michał Pawlikowski}
\address{Institute of Mathematics, Lodz University of Technology, Wólczańska 215, 93-005 Łódź, Poland}
\email {michal-pawlikowski4@wp.pl}
\title[On strong algebrability of families of non-measurable functions]{On strong algebrability of families of non-measurable functions of two variables}
\subjclass[2020]{Primary: 46B87; Secondary: 26B40, 03E75}
\keywords{lineability, algebrability, non-measurable functions, sup-measurable functions, separately measurable functions, Jones functions}
\date{}
\begin{document}

\begin{abstract}
Recently Tomasz Natkaniec in \cite{Nat} studied the lineability problem for several classes of non-measurable functions in two variables. In this note we improve his results in the direction of algebrability. In particular, we show that most of the classes considered by Natkaniec contain free algebras with $2^\C$ many generators. 
\end{abstract}
\maketitle

\section{Introduction}
The last 20 years have seen a huge development in the study of the existence of large and rich algebraic structures within the subsets of linear spaces, function algebras and their Cartesian products. The topic already has its own place in the Mathematical Subject Classification -- 46B87, and both a monograph (see \cite{LineabilityBook}) and a review article (see \cite{LineabilityArticle}) are devoted to it. The customary name for problems in this area is lineability or algebrability problems. These problems occur in many areas of mathematics.

Recently Tomasz Natkaniec in \cite{Nat} considered the lineability problem for several classes of non-measurable functions of two variables. Most of his results are optimal in the sense that given families are $2^\C$-lineable in the algebra of all real functions of two variables which is itself of cardinality $2^\C$. 

Improving all the results of \cite{Nat} in the direction of algebrability is the main goal of this paper:
\begin{itemize}

    \item In \cite[Theorem 3 and Theorem 9]{Nat} it is proved that the family of all sup-measurable functions that are non-measurable is $2^\C$-lineable; first it is proved under CH, then under $\operatorname{non}(\mathcal{N})=\mathfrak{c}$. Both of these imply condition (A) (see Section \ref{SectionConitionA}), which in turn implies that the family is strongly $2^\C$-algebrable, see Theorem \ref{SupMeasurableNonMeasurableNewAssumption}.
    \item In \cite[Theorem 4 and Theorem 10]{Nat} it is proved that the family of all weakly sup-measurable functions which are neither measurable nor sup-measurable is $2^\C$-lineable; first it is proved under CH, then under $\operatorname{non}(\mathcal{N})=\C$. We prove in Theorem \ref{WeaklySupmeasurableNonmeasurableNonSupmeasurable} that (A) implies that the family is strongly $2^\C$-algebrable.
    \item In \cite[Theorem 12]{Nat} it is proved that the family of all non-measurable separately measurable functions (see Section \ref{SectionSeparatelyMeasurable}) is $2^\C$-lineable. We prove that this family is strongly $2^\C$-algebrable, see Theorem \ref{Non_measurable_sep_measurable}.
     \item In \cite[Theorem 13]{Nat} it is proved that the family of all non-measurable functions $F\colon\R^2\to\R$ whose all vertical and horizontal sections are Darboux Baire one is $\C$-lineable. We prove that this family is strongly $\C$-algebrable, see Theorem \ref{TheoremDarbouxSections}. Our proof is relatively simple compared to Natkaniec's.  
    \item In \cite[Theorem 16]{Nat} it is proved that the family of all non-measurable functions having all vertical sections approximately continuous (see Section \ref{Appr_Cont_Section}) and all horizontal sections measurable is $2^\C$-lineable under the assumption that $\operatorname{cov}(\mathcal{N})=\operatorname{add}(\mathcal{N})$ (see Section 
    \ref{Cov_is_Add_section}).  In Theorem \ref{Non_Measurable_all_vert_sec_app_cont_and_all_hor_sec_meas} we improve it to strong $2^\C$-algebrability.
\end{itemize}
For completeness, we show that the family of all measurable functions that are not sup-measurable is strongly $2^\C$-algebrable, see Theorem \ref{Mateusz}. Furthermore, in Section \ref{Jones_Secion} we define a family of sup-Jones functions. We prove that this family is $2^\C$-lineable, see Theorem \ref{Jonesy}.

The paper is organised as follows. In Section \ref{Pre} we give all the ingredients. We have divided it into several subsections to help the reader navigate. In Section \ref{Res} we cook up the proofs. 

\section{Preliminaries}\label{Pre}
\subsection{Lineability and strong algebrability}
Let $\mathcal{L}$ be a vector space, $A \subseteq \mathcal{L}$ and $\kappa$ be a cardinal number. We say that $A$ is $\kappa$\textit{-lineable} if $A \cup \{0\}$ contains a $\kappa$-dimensional subspace of $\mathcal{L}$. 
If we take $\mathcal{L}$ to be a commutative algebra, $A \subseteq \mathcal{L}$, then we say that $A$ is \textit{strongly $\kappa$-algebrable} if $A \cup\{0\}$ contains a $\kappa$-generated subalgebra $B$ which is isomorphic to a free algebra.

Note that the set $X=\left\{x_{\alpha}\colon \alpha<\kappa\right\}$ is a set of free generators of some free algebra if and only if the set of all elements of the form $x_{\alpha_{1}}^{k_{1}} x_{\alpha_{2}}^{k_{2}} \cdots x_{\alpha_{n}}^{k_{n}}$,  where $k_1,k_2,\dots,k_n$ are non-negative integers non-equal to $0$ and $\alpha_1<\alpha_2<\ldots<\alpha_n<\kappa$, is linearly independent; equivalently, for any $k \geq 1$, any non-zero polynomial $P$ in $k$ variables without a constant term and any distinct $x_{\alpha_{1}}, \ldots, x_{\alpha_{k}} \in X$, we have that $P(x_{\alpha_{1}}, \ldots, x_{\alpha_{k}})\in A\setminus\{0\}$. Note that if $P(x_{\alpha_1}, \dots, x_{\alpha_k})$ is non-zero for any distinct $\alpha_1, \dots, \alpha_k$, then $\{x_{\alpha}\colon\alpha < \kappa\} \subseteq A\setminus\{0\}$ (consider $P(x)=x$) and elements of $\{x_{\alpha}\colon\alpha < \kappa\}$ are  different (consider $P(x,y)=x-y$). We will use this observation without mentioning it in every single proof of algebrability or lineability.

It turns out that $\R^\R$, or equivalently $\R^\mathfrak{c}$, contains a set of free generators with cardinality $2^\mathfrak{c}$, see \cite{BGP}.

\subsection{Sup-measurable functions}
Given a real function $f$ in one variable and a real function $F$ in two variables, we can define the \textit{Carath\'{e}odory superposition} of $F$ and $f$ as a real function $F_f$ in one variable given by $F_f(x)=F(x,f(x))$. A function $F\colon\R^2\to\R$ is said to be \textit{sup-measurable} if $F_f$ is Lebesgue measurable for every Lebesgue measurable $f\colon\R\to\R$. By \cite[Lemma 3.4]{BMRS}, it is  sufficient to check the measurability of $F_f$ only for continuous functions $f$. There are measurable functions that are not sup-measurable: consider $F\colon \R^2\to \R$, $F=\chi_{X\times \{0\}}$, where $X\subseteq \R$ is non-measurable. The problem whether sup-measurable functions are measurable is undecidable in ZFC. On the one hand
under the continuum hypothesis (CH) there is a sup-measurable function that is non-measurable, see \cite{GL} and \cite{Ka} for the first such constructions. On the other hand there is a model of ZFC in which every sup-measurable function is measurable \cite{RS}. 

A function $F\colon \mathbb{R}^{2} \rightarrow \mathbb{R}$ is \textit{weakly sup-measurable} if the superposition $F_{f}$ is measurable for any continuous and almost everywhere differentiable function $f\colon \mathbb{R} \rightarrow \mathbb{R}$.

\subsection{Separately measurable functions}\label{SectionSeparatelyMeasurable} For a function $F\colon\R^2\to \R$ and $y\in \R$ we denote by $F(\cdot, y)$ the horizontal section of $F$ at $y$, i.e. the function $x\mapsto F(x, y)$. Similarly, $F(y, \cdot)$ is the vertical section of $F$ at $y$.

We say that a function $F\colon \mathbb{R}^{2} \rightarrow \mathbb{R}$ is  separately measurable if all horizontal and vertical sections of $F$ are measurable. A separately measurable function needs not to be measurable. To see this, consider a set $A\subseteq\R^2$ which has full outer measure but its intersection with each vertical and each horizontal line is a finite set (e.g. $A=\bigcup_{\alpha<\C}A_\alpha$, where $A_\alpha, \alpha<\C$, are like in Lemma \ref{rodzinaAalfa} for $Y=\R^2$). 
$A$ has full outer measure, but every vertical section of $A$ is null. Therefore, by Fubini's Theorem, $A$ is non-measurable. Consider the characteristic function $\chi_A$ of $A$. Clearly $\chi_A$ is non-measurable. Let $x\in\R$. Then $\{y\in\R\colon(x,y)\in A\}$ has at most two elements. Since $y\mapsto \chi_A(x,y)$ takes non-zero values on a finite set, it is measurable. Similarly, $x\mapsto \chi_A(x,y)$ is measurable for every $y\in\R$. So $\chi_A$ is separately measurable. Note that if $F\colon\R^2\to\R$ has the property that $\{(x,y)\colon F(x,y)\neq 0\}\subseteq A$, then $F$ is separately measurable by the very same argument. 

The following observation, which is a slight modification of \cite[Lemma 11]{Nat}, will be a useful tool for us. 
\begin{lemma}\label{rodzinaAalfa}
Let $Y\subset \R^2$ be a measurable set with positive measure. There exists a family $\left\{A_\alpha\colon \alpha<\mathfrak{c}\right\}$ of pairwise disjoint subsets of $Y$ such that
\begin{itemize}
    \item[(1)] each $A_\alpha$ has full outer measure (in $Y$); 
    \item[(2)] all horizontal and vertical sections of $\bigcup_{\alpha<\mathfrak{c}} A_\alpha$ have at most one element.
\end{itemize}
\end{lemma}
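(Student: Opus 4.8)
The plan is to construct the union $\bigcup_{\alpha<\C}A_\alpha$ point by point via transfinite recursion, adding one point at a time so that no two chosen points ever share a first coordinate or a second coordinate; the assignment of points to the individual sets $A_\alpha$ is then recorded by a second index.

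First I would reformulate requirement (1). By inner regularity of planar Lebesgue measure $\mu$, every measurable $C\subseteq Y$ with $\mu(C)>0$ contains a compact set of positive measure, so a set $A\subseteq Y$ has full outer measure in $Y$ exactly when it meets every compact $K\subseteq Y$ with $\mu(K)>0$: if $A$ missed such a $K$ then $Y\setminus A$ would contain the positive-measure set $K$, contradicting full outer measure, and conversely meeting all such $K$ forces the inner measure of $Y\setminus A$ to vanish. Since $\R^2$ has only $\C$ closed subsets, I can enumerate all compact $K\subseteq Y$ of positive measure as $\{P_\beta\colon\beta<\C\}$. Fixing a bijection $\C\to\C\times\C$, $\xi\mapsto(\alpha_\xi,\beta_\xi)$, I process the pairs one by one: the point chosen at stage $\xi$ will lie in $P_{\beta_\xi}$ and be placed into $A_{\alpha_\xi}$, which guarantees that each $A_\alpha$ eventually meets every $P_\beta$.

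The heart of the argument is the recursion step. Suppose points $p_\eta$ have been chosen for all $\eta<\xi$; the first coordinates and the second coordinates used so far form two sets of cardinality $|\xi|<\C$. The key step is to find a point of $P_{\beta_\xi}$ whose first and second coordinates are both new, and this is where Fubini's theorem enters. Because $\mu(P_{\beta_\xi})>0$, the set $D$ of abscissae $x$ for which the vertical section $(P_{\beta_\xi})_x=\{y\colon(x,y)\in P_{\beta_\xi}\}$ has positive linear measure has itself positive measure, hence cardinality $\C$; deleting the fewer than $\C$ forbidden abscissae still leaves some $x_0\in D$. The section $(P_{\beta_\xi})_{x_0}$ then has positive linear measure, so it has $\C$ points, and deleting the fewer than $\C$ forbidden ordinates leaves some admissible $y_0$ in it. I set $p_\xi=(x_0,y_0)\in P_{\beta_\xi}$ and put it into $A_{\alpha_\xi}$.

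Finally, with $A_\alpha=\{p_\xi\colon\alpha_\xi=\alpha\}$, all the $p_\xi$ are distinct with pairwise distinct first and pairwise distinct second coordinates, so the $A_\alpha$ are pairwise disjoint subsets of $Y$ and every horizontal and every vertical line meets $\bigcup_{\alpha<\C}A_\alpha$ in at most one point, which is (2). For (1), given $\alpha$ and a compact $P_\beta\subseteq Y$ of positive measure, the pair $(\alpha,\beta)$ is handled at some stage $\xi$, whence $p_\xi\in A_\alpha\cap P_\beta$; thus $A_\alpha$ meets every positive-measure compact subset of $Y$ and so has full outer measure in $Y$. The only genuine obstacle is the recursion step—verifying that a closed set of positive planar measure survives the removal of fewer than $\C$ vertical and horizontal lines—and once the Fubini computation above is in hand, everything else is routine bookkeeping.
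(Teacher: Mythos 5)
Your proof is correct, and it is essentially the standard transfinite-recursion argument that the paper itself omits by citing Natkaniec's Lemma 11: enumerate the positive-measure compact subsets of $Y$, run through $\mathfrak{c}\times\mathfrak{c}$ many stages picking one new point per stage while avoiding all previously used horizontal and vertical lines, and sort the points into the $A_\alpha$ by the first index. The Fubini step guaranteeing that a compact set of positive planar measure survives the deletion of fewer than $\mathfrak{c}$ lines is exactly the right justification, so nothing is missing.
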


\subsection{Darboux Baire one functions}\label{sekcjaDB1}
We say that that a function $f\colon\R\to \R$ is Darboux if it has the intermediate value property. We say that $f$ is Baire one if it is a pointwise limit of a sequence of continuous functions. The latter is equivalent to the fact that $f^{-1}[U]$ is $F_{\sigma}$ for any open $U \subseteq \R$.

The next simple lemma will be a useful tool in our investigations.

\begin{lemma}\label{lemacikBaireOne}
Let $F_1, F_2, \dots, F_n$ be a partition of $\R$ into $F_{\sigma}$ sets, $f_1, \dots, f_n$ be Baire one functions. Then $\ell=\sum_{i=1}^n f_i \chi_{F_i}$ is a Baire one function.
\end{lemma}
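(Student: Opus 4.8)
The plan is to use the characterization of Baire one functions recalled just above the statement: a function $g\colon\R\to\R$ is Baire one if and only if $g^{-1}[U]$ is $F_\sigma$ for every open set $U\subseteq\R$. So I would simply verify this preimage property for $\ell$ and invoke the characterization, rather than trying to build an explicit sequence of continuous functions converging pointwise to $\ell$ (which would be needlessly laborious).

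Fix an open set $U\subseteq\R$. Since $F_1,\dots,F_n$ partition $\R$ and $\ell$ agrees with $f_i$ on $F_i$, the preimage splits cleanly along the partition:
\[
\ell^{-1}[U]=\bigcup_{i=1}^n\bigl(F_i\cap f_i^{-1}[U]\bigr).
\]
For each $i$, the set $f_i^{-1}[U]$ is $F_\sigma$ because $f_i$ is Baire one, and $F_i$ is $F_\sigma$ by hypothesis, so the whole analysis reduces to understanding how the $F_\sigma$ class behaves under the two finite set operations appearing above.

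The remaining point is the elementary observation that the family of $F_\sigma$ subsets of $\R$ is closed under finite intersections and finite unions. Indeed, if $A=\bigcup_m A_m$ and $B=\bigcup_k B_k$ with all $A_m,B_k$ closed, then $A\cap B=\bigcup_{m,k}(A_m\cap B_k)$ is a countable union of closed sets, hence $F_\sigma$; and a finite union of $F_\sigma$ sets is visibly $F_\sigma$. Consequently each $F_i\cap f_i^{-1}[U]$ is $F_\sigma$, and so is their finite union $\ell^{-1}[U]$. Since this holds for every open $U$, the function $\ell$ is Baire one. I do not expect any genuine obstacle here: the entire content is the preimage identity together with closure of the $F_\sigma$ class under finite intersection and finite union, and no step should require more than this.
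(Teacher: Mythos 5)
Your proof is correct and follows essentially the same route as the paper's: both compute $\ell^{-1}[U]=\bigcup_{i=1}^n\bigl(F_i\cap f_i^{-1}[U]\bigr)$ for open $U$ and conclude via closure of the $F_\sigma$ class under finite unions and intersections. The only difference is that you spell out the elementary verification of that closure property, which the paper takes for granted.
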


\begin{proof}
Let $U \subseteq \R$ be open. Then

\[
  \ell^{-1}[U]=\{x \in \R\colon\sum_{i=1}^n f_i(x)\chi_{F_i}(x) \in U\}=\bigcup_{i=1}^n \{x \in F_i\colon\sum_{i=1}^n f_i(x)\chi_{F_i}(x) \in U\}
  \]
  \[
  =\bigcup_{i=1}^n \{x\in F_i\colon f_i(x) \in U\}=\bigcup_{i=1}^n (F_i \cap f_i^{-1}[U]) \]

is an $F_{\sigma}$ set, since the family of all $F_{\sigma}$ sets is closed under finite unions and intersections.
\end{proof}

\subsection{Approximately continuous functions}\label{Appr_Cont_Section}
A function $f\colon \mathbb{R} \rightarrow \mathbb{R}$ is called approximately continuous if it is continuous in the density topology, i.e. for any open set $U \subseteq \mathbb{R}$ the set $f^{-1}[U]$ is measurable and has density one at each of its points. It turns out that every approximately continuous function is Darboux of the first Baire class. If $N$ is a null set, then by \cite[Theorem 6.5]{Bruckner} there exists an approximately continuous function $g\colon\R\to[0,1]$ such that $g^{-1}(0)$ is a null cover of $N$. Then every $h_n=n\min\{g,1/n\}\colon\R\to[0,1]$ is approximately continuous as a composition of a continuous function $x\mapsto n\min\{x,1/n\}$ and an approximately continuous function $g$. Furthermore, $h_n^{-1}(0)$ is a null cover of $N$ and there exists $n$ such that the measure of $\R\setminus h_n^{-1}(1)$ is less than 1.

\subsection{Jones functions}\label{Jones_Secion}
A function $f\colon \R\to \R$ is called a \textit{Jones function} if for every closed set $K\subseteq \R^2$ with an uncountable projection on the $x$-axis we have $f\cap K\neq \emptyset$. 
Equivalently, if for any perfect subset $P\subseteq \R$ and continuous function $g\colon P\to \R$ we have $f\cap g\neq \emptyset$.

Jones functions were introduced by F. B. Jones in \cite{Jones}. The author considered solutions of the Cauchy equation $f(x+y)=f(x)+f(y)$. He constructed a discontinuous function with a connected graph which satisfies the equation and the above definition.

One could ask if there is a function $F\colon\R^2\to \R$ for which all Carath\'{e}odory superpositions with continuous functions are Jones. Note that this problem is trivial -- it suffices to define $F(x, y)=g(x)$, where $g$ is any Jones function. Therefore we also consider inverted Carath\'{e}odory superpositions: we say that a function $F\colon\R^2\to \R$ is \textit{sup-Jones} if for every continuous function $f\colon\R\to\R$ the functions $F_f(x)\coloneqq F(x, f(x))$, $F^f(x)\coloneqq F(f(x), x)$ are Jones.

\subsection{Condition (A)}\label{SectionConitionA}
Consider the following condition.
\begin{enumerate}
    \item[(A)] There exists a function $f\colon \R\to \R$ which is a union of a family of pairwise disjoint partial functions $\{f_\alpha\colon \alpha<\C\}$ such that each $f_\alpha$ has positive outer measure and $\{x\in \R\colon f(x)=g(x)\}$ is a null set for each continuous $g\colon \R\to \R$.
\end{enumerate}

In \cite{Weizsacker} von Weizsäcker noted that if $\operatorname{non}(\mathcal{N})\coloneqq \min\{\vert A\vert\colon A\subseteq \R \text{ is not a null set}\}=\C$, then there exists a function $f\colon\R\to\R$ which has full outer measure and $\{x\in \R\colon f(x)=g(x)\}$ is a null set for every continuous $g\colon \R\to \R$. Under the same assumption, by \cite[Lemma 8]{Nat}, such a function can be decomposed into $\C$ many partial functions of full outer measure. In fact, with the assumption $\operatorname{non}(\mathcal{N})=\C$ such a family of pairwise disjoint partial functions can be defined in a similar way to von Weizsäcker's definition of a single function.

Later we will prove that condition (A) implies strong $2^\C$-algebraility of the family of all sup-measurable functions which are non-measurable. Thus, by the result of Rosłanowski and Shelah, \cite{RS} condition (A) is independent of ZFC. It is unclear to us whether condition (A) is equivalent to the existence of a non-measurable sup-measurable function or not.

\subsection{Condition $\operatorname{cov}(\mathcal{N})=\operatorname{add}(\mathcal{N})$}\label{Cov_is_Add_section}  The minimal cardinal number $\kappa$ such that the real line can be covered by $\kappa$ many null sets is denoted by $\operatorname{cov}(\mathcal{N})$ and it is between $\omega_1$ and $\mathfrak{c}$. Similarly, the minimal cardinal number $\kappa$ such that some union of $\kappa$ many null sets is not null is denoted by $\operatorname{add}(\mathcal{N})$. Clearly $\omega_1\leq\operatorname{add}(\mathcal{N})\leq \operatorname{cov}(\mathcal{N})$. The equality $\operatorname{cov}(\mathcal{N})=\operatorname{add}(\mathcal{N})$ means that $\R$ can be covered by $\kappa$ many null sets but any union of less than $\kappa$ many of them is null, where $\kappa$ is the common cardinal $\operatorname{cov}(\mathcal{N})$ and $\operatorname{add}(\mathcal{N})$. This condition is independent of ZFC, see \cite{BJ} for details. For example, it is fulfilled under CH, where both $\operatorname{cov}(\mathcal{N})$ and $\operatorname{add}(\mathcal{N})$ are $\omega_1$.

\subsection{Almost perfectly everywhere surjective functions and Bernstein sets}\label{SectionAPESnowa}

We say that $f\colon\mathbb{ R}\to\mathbb{R}$ is \textit{almost perfectly everywhere surjective} if its range  $f[\mathbb{R}]$ is one of the following: $\mathbb{R},[0,\infty)$, or $(-\infty,0]$, and $f[P]=f[\mathbb{R}]$ for any perfect set $P\subset\mathbb{R}$. Note that this notion is different from that of a perfectly everywhere surjective function known in the literature, cf. \cite{GMS}. Let us denote the family of all almost perfectly everywhere surjective functions by $\mathcal{APES}$. It follows from \cite[Theorem 2.2]{BGP} that $\mathcal{APES}$ is strongly $2^\mathfrak{c}$-algebrable.

This notion is connected to the following. A set $B\subseteq\R$ is called a \textit{Bernstein set} if $B\cap P\neq\emptyset$ and $(\R\setminus B)\cap P\neq\emptyset$ for every perfect subset $P$ of $\R$. It is known that such sets are non-measurable. Let $f$ be an almost perfectly everywhere surjective function. Then $f^{-1}(x)$ is a Bernstein set for every $x\in f[\mathbb{R}]$. Therefore $f$ is non-measurable and $\{f^{-1}(x)\colon x\in f[\R]\}$ is a partition of $\R$.

\subsection{Approximately differentiable and nowhere approximately differentiable functions}\label{SectionApproxDiff} A function $f\colon \mathbb{R} \rightarrow \mathbb{R}$ is said to be \textit{approximately differentiable} at a point $x \in \mathbb{R}$ if there exists a measurable set $E \subset \mathbb{R}$ such that $x$ is its density point, and the restriction $f \upharpoonright E$ is differentiable at $x$. A function $f\colon \mathbb{R} \rightarrow \mathbb{R}$ is \textit{nowhere approximately differentiable} if it is approximately differentiable at no $x \in \mathbb{R}$.

We will use the following observation. Let $g$ be a continuous and almost everywhere differentiable function, and let $f$ be a continuous and nowhere approximately differentiable function. Then the set $E\coloneqq\{x\in\mathbb{R}\colon f(x)=g(x)\}$ has measure zero. To see this, first note that $E$ is closed, since both $g$ and $f$ are continuous. Suppose, on the contrary, that $E$ has positive measure. By the Lebesgue Density Theorem, $E$ has a density point, say $x$, at which $g$ is differentiable. This implies that $f$ is approximately differentiable at $x$, which is a contradiction. 

\subsection{Grande's construction of non-measurable function with Darboux Baire one sections.}\label{sekcjaGrande}
In \cite{Lipinski} Lipi\'{n}ski constructed an example of a non-measurable function $F\colon\R^2\to \R$ whose all vertical and horizontal sections are Darboux Baire one functions. Another such construction is due to Grande \cite[Theorem 2]{Grande}. The paper is written in French and is therefore not easily accessible. Here we present Grande's construction, slightly modified for our purposes. The difference is this. The function $h\colon X \to \left(0,1\right]$ used below was constant and equal to $1$ in Grande's original construction.

Let $C\subseteq [0 ,1]$ be a Cantor set of positive measure with $0, 1\in C$. Let $a_0=0$, $b_0=1$ and $\{(a_n, b_n)\colon n\geq 1\}$ be an enumeration of the gaps.  Define
\[g(x)=
\begin{cases*}
      g_n(x) & if $x\in (a_n, b_n)$ for some $n\geq 1$, \\
      0 & otherwise,
    \end{cases*}
    \]
where $g_n\colon (a_n, b_n)\to (0, 1]$, $n\geq 1$ are continuous surjections onto $(0, 1]$, with
\[
\lim_{x\rightarrow a_n^+} g_n(x)= \lim_{x\rightarrow b_n^-} g_n(x) = 0.
\]
Due to the density of the gaps in $C$, any modification of the function $g$ on $C$ with values in $[0, 1]$ preserves the intermediate value property.
Let $B\subseteq C\setminus \bigcup_{n\geq 0} \{a_n, b_n\}$ 
be a closed set with positive measure.

Let $X\subseteq B\times B$ be a non-measurable set in which all vertical and horizontal sections have at most one element (e.g. $X=\bigcup_{\alpha<\C}A_\alpha$, where $A_\alpha, \alpha<\C$, are like in Lemma \ref{rodzinaAalfa} for $Y=B\times B$). Let $h\colon X\to (0, 1]$ be any function. Define
\[
F_h(x, y) =
    \begin{cases*}
      g(x) & if $x\in \R\setminus B$, \\
      g(y) & if $x\in B$ and $(x, y)\notin X$, \\
      h(x, y) & if $(x, y)\in X$.
    \end{cases*}
\]

Note that $F_h^{-1}(0)\cap(C\times C)=(C\times C)\setminus X$, so $F_h$ is non-measurable.

We will show that all vertical and horizontal sections of $F$ are Darboux Baire one functions.


Let $x\in \R$. If $x\in \R\setminus B$, then $F_h(x, \cdot)$ is constant. Assume that $x\in B$. Then $F_h(x, \cdot)$ is either $g$ or $g$ modified at some point $y\in C$, where $(x, y)\in X$, so it has the intermediate value property. By Lemma \ref{lemacikBaireOne}, $F_h(x, \cdot)$ is also Baire one. 

Let $y\in \R$. 
Consider the function 
\[
\ell(x) =
    \begin{cases*}
      g(x) & if $x\in \R\setminus B$, \\
      g(y) & if $x\in B$.
    \end{cases*}
\]
Note that $F_h(\cdot, y)$ is either $\ell$ or $\ell$ modified at one point $x\in C$, provided that $(x, y)\in X$, so it has the intermediate value property. By Lemma \ref{lemacikBaireOne}, $F_h(\cdot, y)$ is also Baire one.

\subsection{Exponential like functions}\label{exponentialLIKE}
We say that a function $f\colon\R\to \R$ is \textit{exponential like} if 
\[
f(x)=\sum_{i=1}^n \alpha_i e^{\beta_i x}, x\in \R
\]
for some positive integer $n$, non-zero real numbers $\alpha_1, \dots, \alpha_n$ and distinct non-zero real numbers $\beta_1, \dots, \beta_n$. The notion was described in \cite{Balcerzak}, where the Authors proved that if $\mathcal{A}\subseteq \R^\R$ is an arbitrary  family and there exists a function $F\in \mathcal{A}$ such that $f\circ F\in \mathcal{A}$ for every exponential like function $f$, then $\mathcal{A}$ is strongly $\C$-algebrable.

\subsection{Ultrafilters on $\w$}
By an \textit{utrafilter} on $\w$ we mean any maximal non-trivial family of subsets of $\w$ which is closed under taking supersets and finite intersections. Endowing $\w$ with the discrete topology, we denote by $\beta \w$ its Stone-Čech compactification, that is, the set of all ultrafilters on $\w$ endowed with the topology which basic sets are of the form
\begin{equation*}
\beta a=\{ U \in \beta \w \colon a \in U\},
\end{equation*}
where $a \subset \w$. We will identify $\w$ with the family of all principal ultrafilters $\delta_n=\{a \subset \w \colon n \in a\}$, $n\in \w$.

Using the fact that if $X$ is a compact space and
$f \colon \w \to X$ is a continuous function, then
there exists a continuous extension $\overline{f}\colon \beta \w \to X$ of $f$ (see e.g. \cite{engelking}), we prove the following.
 
\begin{lemma}\label{lematOrozszerzaniu}
 Let $f \colon \w \to m$ be any function. Let
$\overline{f}$ be a continuous extension of $f$, let $i<m$ and $u=f^{-1}(i)$. Then $\overline{f}(U)=i$ for every $U\in \beta u$.   
\end{lemma}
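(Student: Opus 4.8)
The plan is to use the universal property of the Stone–Čech compactification $\beta\w$, namely that a continuous function from $\w$ into a compact Hausdorff space extends uniquely to $\beta\w$. The target space $m=\{0,1,\dots,m-1\}$ carries the discrete topology and is finite, hence compact, so the extension $\overline{f}\colon\beta\w\to m$ exists and is continuous. Since $m$ is discrete, the key point is that continuity forces $\overline{f}$ to be \emph{locally constant}: for each $U\in\beta\w$ the singleton $\{\overline{f}(U)\}$ is open in $m$, so its preimage $\overline{f}^{-1}(i)$ is open in $\beta\w$ for every $i<m$.

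First I would fix $i<m$ and set $u=f^{-1}(i)\subseteq\w$. I want to show $\overline{f}^{-1}(i)=\beta u$, which gives the claim immediately, since then every $U\in\beta u$ satisfies $\overline{f}(U)=i$. For the inclusion $\beta u\subseteq\overline{f}^{-1}(i)$, note that $\overline{f}^{-1}(i)$ is an open subset of $\beta\w$ that contains every principal ultrafilter $\delta_n$ with $n\in u$ (because $\overline{f}(\delta_n)=f(n)=i$ for $n\in u$, using that $\overline{f}$ extends $f$ under the identification $n\leftrightarrow\delta_n$). The principal ultrafilters $\{\delta_n\colon n\in\w\}$ are dense in $\beta\w$, and more precisely $\{\delta_n\colon n\in u\}$ is dense in $\beta u$. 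Since $\overline{f}^{-1}(i)\cap\beta u$ is a relatively open subset of $\beta u$ containing this dense set, and $\beta u$ is the closure of $\{\delta_n\colon n\in u\}$ in $\beta\w$, a density argument yields $\beta u\subseteq\overline{f}^{-1}(i)$.

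To make the density argument precise, I would argue by contradiction or directly via the clopen basis. The sets $\beta a$, $a\subseteq\w$, form a clopen basis for $\beta\w$, and one has the standard identities $\beta a\cap\beta b=\beta(a\cap b)$ together with the fact that $\beta a$ is exactly the closure of $\{\delta_n\colon n\in a\}$. Thus any nonempty relatively open subset $V$ of $\beta u$ contains some basic clopen set $\beta a\cap\beta u=\beta(a\cap u)$ with $a\cap u\neq\emptyset$, hence contains a principal ultrafilter $\delta_n$ with $n\in a\cap u\subseteq u$. Applying this to $V=\beta u\setminus\overline{f}^{-1}(i)$ (which is relatively open in $\beta u$ since $\overline{f}^{-1}(i)$ is open) would produce an $n\in u$ with $\overline{f}(\delta_n)\neq i$, contradicting $\overline{f}(\delta_n)=f(n)=i$. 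Therefore $V=\emptyset$ and $\beta u\subseteq\overline{f}^{-1}(i)$, which is all that is needed for the statement.

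The routine direction is the inclusion $\overline{f}^{-1}(i)\subseteq\beta u$, which I would not even need for the stated conclusion but would include for completeness: if $\overline{f}(U)=i$ and $U\notin\beta u$, then $\w\setminus u\in U$, and the open set $\overline{f}^{-1}(i)\cap\beta(\w\setminus u)$ would contain $U$ yet meet $\{\delta_n\colon n\notin u\}$ densely, forcing some $n\notin u$ with $f(n)=i$, contradicting $u=f^{-1}(i)$. The main obstacle is not any deep fact but the careful bookkeeping of the identification of $\w$ with principal ultrafilters and the density of principal ultrafilters in each basic clopen set $\beta a$; once those standard properties of $\beta\w$ are invoked, the proof is a short topological argument exploiting the discreteness of the finite target $m$.
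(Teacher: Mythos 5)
Your proof is correct and follows essentially the same route as the paper's: both exploit the discreteness of the finite target $m$ together with the density of the principal ultrafilters in the basic clopen sets $\beta a$ to rule out $\overline{f}(U)\neq i$ for $U\in\beta u$. (One small wording fix: $\beta u\setminus\overline{f}^{-1}(i)$ is relatively open because $\overline{f}^{-1}(i)$ is \emph{closed} --- indeed clopen, since $\{i\}$ is clopen in $m$ --- not ``since $\overline{f}^{-1}(i)$ is open.'')
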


\begin{proof}
    The extenstion $\overline{f}$ exists as $f$ is continuous. Take any $U \in \beta u$ and suppose that $\overline{f}(U) \neq i$.
Take a neighbourhood $V$ of $\overline{f}(U)$ with $i\notin V$. Then there is basic open set $\beta b$ with $U\in \beta b \cap \beta u \subset \overline{f}^{-1}[V]$. By the density of $\w$ in $\beta\w$, there is $k\in \w \cap \beta u \cap \beta b \subset \overline{f}^{-1}[V]$, so $\overline{f}(k)\neq i$. However, $f(k)=i$ because $k \in u$, a contradiction.

\end{proof}


\section{Results}\label{Res}
\begin{theorem}\label{Mateusz}
The family of all measurable functions that are not sup-measurable is strongly $2^\C$-algebrable.
\end{theorem}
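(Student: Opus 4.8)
The plan is to transfer the known strong $2^\C$-algebrability of $\mathcal{APES}$ from one-variable functions to two-variable functions by concentrating everything on the null line $\R\times\{0\}$. Concretely, since $\mathcal{APES}$ is strongly $2^\C$-algebrable (via \cite[Theorem 2.2]{BGP}), I would fix a set $\{\phi_\alpha\colon\alpha<2^\C\}\subseteq\mathcal{APES}$ of free generators of a free subalgebra of $\mathcal{APES}\cup\{0\}$. I then define the candidate generators $F_\alpha\colon\R^2\to\R$ by $F_\alpha(x,y)=\phi_\alpha(x)$ if $y=0$ and $F_\alpha(x,y)=0$ otherwise; that is, $F_\alpha=\phi_\alpha\otimes\chi_{\{0\}}$, supported on $\R\times\{0\}$.

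The verification splits into the two required properties. Each $F_\alpha$ is \emph{measurable} because it vanishes off the set $\R\times\{0\}$, which has planar Lebesgue measure zero, so $F_\alpha=0$ almost everywhere. Each $F_\alpha$ is \emph{not sup-measurable} because, superposing with the continuous function $f\equiv 0$, one gets $(F_\alpha)_f(x)=F_\alpha(x,0)=\phi_\alpha(x)$, and $\phi_\alpha\in\mathcal{APES}$ is non-measurable (its level sets are Bernstein sets). The same computation governs polynomial combinations: for a non-zero polynomial $P$ in $k$ variables without constant term and distinct $\alpha_1<\dots<\alpha_k$, the function $P(F_{\alpha_1},\dots,F_{\alpha_k})$ again vanishes off $\R\times\{0\}$ (since $P$ has no constant term and all $F_{\alpha_i}$ are $0$ there), hence is measurable, while on the line it restricts to $x\mapsto P(\phi_{\alpha_1}(x),\dots,\phi_{\alpha_k}(x))$. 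Because $\{\phi_\alpha\}$ freely generate a free subalgebra of $\mathcal{APES}$, this restriction lies in $\mathcal{APES}$; it is therefore non-zero and non-measurable. Consequently $P(F_{\alpha_1},\dots,F_{\alpha_k})$ is non-zero, measurable, and (again via $f\equiv 0$) not sup-measurable, so it belongs to the family under consideration.

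This simultaneously yields freeness and membership: if some non-trivial $P$ gave $P(F_{\alpha_1},\dots,F_{\alpha_k})=0$, then restricting to $y=0$ would force $P(\phi_{\alpha_1},\dots,\phi_{\alpha_k})=0$, contradicting the freeness of the $\phi_\alpha$. Thus $\{F_\alpha\colon\alpha<2^\C\}$ freely generates a subalgebra all of whose non-zero elements are measurable and not sup-measurable, establishing strong $2^\C$-algebrability according to the criterion recalled in the preliminaries.

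I do not expect a serious obstacle here; the construction is essentially a reduction. The only point requiring care is the bookkeeping that a polynomial without constant term evaluated at functions supported on $\R\times\{0\}$ remains supported there and restricts correctly to the one-variable polynomial combination, so that non-measurability of the restriction (inherited from $\mathcal{APES}$) propagates to non-sup-measurability through the test function $f\equiv 0$, while measurability is automatic from the null support.
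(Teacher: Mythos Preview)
Your proof is correct. Both you and the paper exploit the same two-step template: concentrate the generators on a planar null set (so every function in the algebra is automatically measurable) and then witness non-sup-measurability via a \emph{constant} superposition. The difference is in how the free structure and the non-measurability are packaged. You place everything on the line $\R\times\{0\}$ and import both freeness and non-measurability at once from the ready-made free algebra inside $\mathcal{APES}$, testing with $f\equiv 0$. The paper instead uses the thicker null set $X\times C$ (with $X\subseteq\R$ non-measurable and $C$ the Cantor set), puts abstract free generators $h_\xi\in\R^C$ in the $y$-variable, and tests with $f\equiv y_0$ for a suitably chosen $y_0\in C$; the non-measurability then comes from the scaled indicator of $X$. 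Your version is slightly slicker and relies on the specific $\mathcal{APES}$ result; the paper's version is a bit more elementary in that it only needs the existence of $2^\mathfrak{c}$ free generators in $\R^C$ together with a single non-measurable set.
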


\begin{proof} 
$C$ is the Cantor ternary set. By $\{h_\xi\colon\xi<2^\mathfrak{c}\}$ we denote a set of free generators of an algebra in $\R^C$. Let $X\subseteq \R$ be a non-mesurable set. For $\xi<\mathfrak{c}$ we define
\[
F_\xi(x, y) = h_\xi(y)\chi_{X\times C}(x, y).
\]
Then $\{(x, y)\in \R^2\colon F_\xi(x, y)\neq 0\}$ is a null set for each $\xi <\C$, so functions $F_\xi$ are measurable.

For $\xi_1<\xi_2<\dots<\xi_k$ and polynomial $P$ in $k$ variables without constant term we have 
\[
F(x, y)\coloneqq P(F_{\xi_1},F_{\xi_2},\dots,F_{\xi_k})(x, y)=P(h_{\xi_1}(y),h_{\xi_2}(y),\dots,h_{\xi_k}(y))\chi_{X\times C}(x, y).
\]
Let $y_0\in C$ be such that $P(h_{\xi_1}(y_0),h_{\xi_2}(y_0),\dots,h_{\xi_k}(y_0))$ is non-zero. Define $g$ to be a constant function, for $x\in \R$
\[
g(x)= y_0.
\]
Then 
\[
F_g=P(h_{\xi_1}(y_0),h_{\xi_2}(y_0),\dots,h_{\xi_k}(y_0))\chi_X
\]
is a scaled characteristic function of a non-measurable set $X$, so $F$ is not sup-measurable. In particular, $F$ is non-zero.
\end{proof}

\begin{theorem}\label{SupMeasurableNonMeasurableNewAssumption}
Assume (A). Then the family of all non-measurable sup-measurable functions is strongly $2^\mathfrak{c}$-algebrable. 
\end{theorem}

\begin{proof} Let $f$ and $f_\alpha$, $\alpha<\mathfrak c$, be as in (A).  
By $\{h_\xi\colon\xi<2^\mathfrak{c}\}$ we denote a set of free generators of an algebra in $\R^\mathfrak{c}$. For $\xi<\mathfrak{c}$ we define
\[
F_\xi(x, y)= \sum_{\alpha<\mathfrak{c}} h_\xi(\alpha)\chi_{f_\alpha}(x, y).
\]
For $\xi_1<\xi_2<\dots<\xi_k$ and polynomial $P$ in $k$ variables without constant term we have 
\[
F\coloneqq P(F_{\xi_1},F_{\xi_2},\dots,F_{\xi_k})=\sum_{\alpha<\mathfrak{c}}P(h_{\xi_1}(\alpha),h_{\xi_2}(\alpha),\dots,h_{\xi_k}(\alpha))\chi_{f_\alpha}
\]
Since $P(h_{\xi_1}(\beta),h_{\xi_2}(\beta),\dots,h_{\xi_k}(\beta))$ is non-zero for some $\beta<\mathfrak{c}$, we have
\[
f_\beta\subseteq F^{-1}(P(h_{\xi_1}(\beta),h_{\xi_2}(\beta),\dots,h_{\xi_k}(\beta)))\subseteq f.
\]
Therefore $F^{-1}(P(h_{\xi_1}(\beta),h_{\xi_2}(\beta),\dots,h_{\xi_k}(\beta)))$ has positive outer measure and null vertical sections (as a subset of a graph of a function), so, by Fubini's Theorem, it is non-measureable.
In particular, $F$ is non-zero.

Let $g\colon\R\to\R$ be a continuous function. Note that if $F(y,g(y))\neq 0$, then $y\in\{x\in \R\colon f(x)=g(x)\}$, which is a null set. So $x\mapsto F(x,g(x))$ is measurable, and consequently $F$ is sup-measurable.  
\end{proof}
\begin{theorem}\label{WeaklySupmeasurableNonmeasurableNonSupmeasurable}
Assume (A). Then the family of all weakly sup-measurable functions that are neither
sup-measurable nor measurable is strongly $2^\mathfrak{c}$-algebrable.
\end{theorem}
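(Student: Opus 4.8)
The plan is to mimic the construction from the proof of Theorem \ref{SupMeasurableNonMeasurableNewAssumption}, but with an important modification that breaks sup-measurability while preserving weak sup-measurability. Recall that weak sup-measurability only requires $F_f$ to be measurable for continuous \emph{and almost everywhere differentiable} $f$, whereas sup-measurability requires it for all continuous $f$. Thus the task is to engineer a function whose superpositions with a.e.\ differentiable continuous functions are measurable, yet which fails measurability of $F_g$ for some continuous $g$ that is nowhere (approximately) differentiable. The non-measurability of $F$ itself comes for free from the same positive-outer-measure-with-null-vertical-sections argument via Fubini.

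The key is to exploit Section \ref{SectionApproxDiff}. Let $f$ and $\{f_\alpha\colon\alpha<\C\}$ be as in condition (A), and fix a continuous nowhere approximately differentiable function $\varphi\colon\R\to\R$ (such functions exist). The plan is to define the generators so that their support is the graph of $f$, \emph{together with} a piece living on the graph of $\varphi$. Concretely, with $\{h_\xi\colon\xi<2^\C\}$ free generators of an algebra in $\R^\C$, one would set something like
\[
F_\xi(x,y)=\sum_{\alpha<\C}h_\xi(\alpha)\chi_{f_\alpha}(x,y)+h_\xi(0)\,\chi_{\{\varphi\}}(x,y),
\]
or more carefully arrange the support on $f$ and on $\varphi$ so that a fixed non-trivial linear combination of the $h_\xi$ detects $\varphi$. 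For a polynomial $P$ without constant term, $F\coloneqq P(F_{\xi_1},\dots,F_{\xi_k})$ is again supported on $f\cup\varphi$ and takes the value $P(h_{\xi_1}(\alpha),\dots,h_{\xi_k}(\alpha))$ on $f_\alpha$ and $P(h_{\xi_1}(0),\dots,h_{\xi_k}(0))$ on $\varphi$. Non-measurability and non-zeroness follow exactly as before from Fubini applied to the part supported on $f$.

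For weak sup-measurability, let $g$ be continuous and a.e.\ differentiable. On the graph-of-$f$ part, $\{x\colon f(x)=g(x)\}$ is null by (A), exactly as in Theorem \ref{SupMeasurableNonMeasurableNewAssumption}. On the graph-of-$\varphi$ part, the set $\{x\colon\varphi(x)=g(x)\}$ is null by the observation in Section \ref{SectionApproxDiff}, since $\varphi$ is continuous nowhere approximately differentiable and $g$ is continuous a.e.\ differentiable. Hence $F_g$ is nonzero only on a null set, so it is measurable, giving weak sup-measurability. To \emph{destroy} sup-measurability, choose a value $y_0$ (in the range needed) so that the constant function $g\equiv y_0$, or rather a continuous function tracing $\varphi$, produces $F_g=P(h_{\xi_1}(0),\dots,h_{\xi_k}(0))\chi_{D}$ for a non-measurable set $D$; here taking $g=\varphi$ itself yields $F_\varphi$ supported on a non-measurable subset of the domain (the projection of the graph-of-$\varphi$ piece), and since $\varphi$ is \emph{not} a.e.\ differentiable this is permitted.

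The main obstacle is the simultaneous bookkeeping: I must arrange the $\varphi$-supported piece so that (i) its $x$-projection is a non-measurable set, forcing $F_\varphi$ non-measurable and thus $F$ non-sup-measurable; (ii) it is genuinely disjoint from the graph of $f$ so the two Fubini/density arguments do not interfere and the claimed values of $F$ on each piece are unambiguous; and (iii) the combination still satisfies weak sup-measurability, i.e.\ the only continuous functions that could make $F_g$ non-measurable are the non-a.e.-differentiable ones. The delicate point is selecting where on the $\varphi$-graph to place a non-measurable projection while keeping vertical sections small enough that the $f$-part argument is unaffected; I expect this to require a careful choice of a non-measurable subset of $\R$ over which to lay down the $\varphi$-values, combined with the nowhere-approximate-differentiability observation to handle all admissible $g$ at once.
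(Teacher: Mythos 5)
Your overall architecture is the same as the paper's: put the generators' support on the graph of $f$ from condition (A) together with the graph of a continuous nowhere approximately differentiable function $\varphi$, use the observation of Section \ref{SectionApproxDiff} to get weak sup-measurability (for a.e.\ differentiable continuous $g$ both $\{x\colon f(x)=g(x)\}$ and $\{x\colon\varphi(x)=g(x)\}$ are null), get non-measurability from the Fubini argument on the $f$-part, and kill sup-measurability via the superposition with $\varphi$ itself. The gap is in the one place you left vague: what to put on the graph of $\varphi$. Your concrete suggestion assigns the constant $h_\xi(0)$ there (restricted over a non-measurable set $D$), so that $F_\varphi=P(h_{\xi_1}(0),\dots,h_{\xi_k}(0))\,\chi_D$ up to a null set. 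For this to be non-measurable you need the coefficient $P(h_{\xi_1}(0),\dots,h_{\xi_k}(0))$ to be non-zero for \emph{every} non-trivial polynomial $P$ without constant term and \emph{every} choice of distinct generators, and this provably cannot be arranged with $2^\mathfrak{c}$ generators: the values $h_\xi(0)$ are reals, so among $2^\mathfrak{c}$ of them some two coincide, say $h_\xi(0)=h_\eta(0)$ with $\xi\neq\eta$; taking $P(u,v)=u-v$, the element $F_\xi-F_\eta$ has vanishing $\varphi$-part, hence all its superpositions with continuous functions are supported on null sets, so it \emph{is} sup-measurable and lies outside the target class. The hedge ``a fixed non-trivial linear combination detects $\varphi$'' does not help, since strong algebrability requires handling all polynomials simultaneously.

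The paper's fix for exactly this point is the idea you are missing: it takes a \emph{second} free system $\{p_\xi\colon\xi<2^\mathfrak{c}\}$ generating a free algebra inside $\mathcal{APES}\cup\{0\}$ (Section \ref{SectionAPESnowa}) and defines the value at $(x,\varphi(x))$ to be $p_\xi(x)$, i.e.\ $G_\xi=\sum_{\alpha<\mathfrak{c}}h_\xi(\alpha)\chi_{f_\alpha\setminus\varphi}+p_\xi(x)\chi_{\varphi}$. Then for any non-trivial $P$ the superposition $G_\varphi=P(p_{\xi_1},\dots,p_{\xi_k})$ is a non-zero element of the free algebra in $\mathcal{APES}\cup\{0\}$, hence almost perfectly everywhere surjective and automatically non-measurable; no auxiliary non-measurable projection set $D$ is needed. (The disjointness bookkeeping you worry about in point (ii) is handled simply by subtracting the graph of $\varphi$ from each $f_\alpha$, which removes only a null set and does not disturb the Fubini argument.) Without some device of this kind your construction does not yield a free algebra contained in the stated family.
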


\begin{proof} Let $f$ and $f_\alpha$, $\alpha<\mathfrak c$, be as in (A).
Let $\{h_\xi\colon\xi<2^\mathfrak{c}\}$ be a set of free generators of an algebra in $\R^\mathfrak{c}$. 
Let $\{p_{\xi}\colon\xi<2^\mathfrak{c}\}$ be a set of free generators spanning an algebra in $\mathcal{APES}\cup\{0\}$ (in fact, we could replace $\mathcal{APES}$ by any strongly $2^\mathfrak{c}$-algebrable family of non-measurable functions).
Let $h\colon\R\to\R$ be a continuous nowhere approximately differentiable function.  
For $\xi<2^\mathfrak{c}$ we define $G_\xi\colon\R^2\to\R$ as follows
\[
G_\xi(x,y)=\sum_{\alpha<\mathfrak{c}}h_\xi(\alpha)\chi_{f_\alpha\setminus h}(x,y)+p_\xi(x)\chi_{h}(x,y).
\]
For $\xi_1<\xi_2<\dots<\xi_k$ and polynomial $P$ in $k$ variables without constant term we have 
\[
G\coloneqq P(G_{\xi_1},\dots,G_{\xi_k})=\sum_{\alpha<\mathfrak{c}}P(h_{\xi_1}(\alpha),\dots,h_{\xi_k}(\alpha))\chi_{f_\alpha\setminus h}+P(p_{\xi_1}(x),\dots,p_{\xi_k}(x))\chi_{h}(x,y).
\]
We need to show that $G$ is weakly sup-measurable, non-measurable and is not sup-measurable (then clearly $G$ is also non-zero).

We already know that $F\coloneqq \sum_{\alpha<\mathfrak{c}}P(h_{\xi_1}(\alpha),h_{\xi_2}(\alpha),\dots,h_{\xi_k}(\alpha))\chi_{f_\alpha}$ is non-measurable -- see the proof of Theorem \ref{SupMeasurableNonMeasurableNewAssumption}. Note that $\{(x,y)\in\mathbb{R}^2\colon F(x,y)\neq G(x,y)\}\subseteq\{(x,h(x))\colon x\in\mathbb{R}\}$. Since the graph of $h$ has measure zero, then $G$ is also non-measurable. 

Let us show that $G$ is not sup-measurable. Consider $G_h(x)=G(x,h(x))=p(x)$ where $p(x)\coloneqq P(p_{\xi_1}(x),\dots,p_{\xi_k}(x))$ is almost perfectly everywhere surjective. As we have noticed in Section \ref{SectionAPESnowa}, almost perfectly everywhere surjective functions are non-measurable. Therefore $G$ is not sup-measurable. 

To finish the proof we need to check that $G$ is weakly sup-measurable. To do this, we fix a continuous almost everywhere differentiable function $g$. Then 
\[
G_g(x) =
    \begin{cases*}
      F_g(x) & if $h(x)\neq g(x)$, \\
      p(x) & if $h(x)=g(x)$. 
    \end{cases*}
\]
Consider the set $E\coloneqq \{x\in\mathbb{R}\colon g(x)=h(x)\}$. As we have noticed in Section \ref{SectionApproxDiff}, $E$ is a null set. This shows that $G_g$ and $F_g$ are equal on a set of full measure. We have shown in the proof of Theorem \ref{SupMeasurableNonMeasurableNewAssumption} that $F_g$ is measurable, and so is $G_g$. Therefore $G$ is weakly sup-measurable.
\end{proof}

\begin{corollary}\label{WeaklySupMEasurable_non_sub_measurable}
Assume (A). The family of all weakly sup-measurable functions that are not sup-measurable is strongly $2^\mathfrak{c}$-algebrable.
\end{corollary}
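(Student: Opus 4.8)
The plan is to deduce this directly from Theorem~\ref{WeaklySupmeasurableNonmeasurableNonSupmeasurable}, since the corollary merely weakens the condition imposed on the target family. Write $\mathcal{A}$ for the family of all weakly sup-measurable functions that are neither sup-measurable nor measurable, and $\mathcal{B}$ for the family of all weakly sup-measurable functions that are not sup-measurable. The two defining conditions differ in a single point: membership in $\mathcal{A}$ additionally requires non-measurability, whereas $\mathcal{B}$ imposes no measurability restriction whatsoever. Hence $\mathcal{A}\subseteq\mathcal{B}$.

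First I would record the elementary monotonicity of strong algebrability: if $\mathcal{A}\subseteq\mathcal{B}\subseteq\mathcal{L}$ for a commutative algebra $\mathcal{L}$ and $\mathcal{A}$ is strongly $\kappa$-algebrable, then $\mathcal{B}$ is strongly $\kappa$-algebrable as well. Indeed, by definition $\mathcal{A}\cup\{0\}$ contains a $\kappa$-generated subalgebra $B_0$ isomorphic to a free algebra; since $\mathcal{A}\cup\{0\}\subseteq\mathcal{B}\cup\{0\}$, the very same $B_0$ witnesses strong $\kappa$-algebrability of $\mathcal{B}$. No new construction is needed.

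Applying this with $\kappa=2^\C$ to the two families above finishes the argument. By Theorem~\ref{WeaklySupmeasurableNonmeasurableNonSupmeasurable} the family $\mathcal{A}$ is strongly $2^\C$-algebrable, so the free algebra built there---the one generated by the functions $G_\xi$, $\xi<2^\C$---already lies inside $\mathcal{B}\cup\{0\}$, and these same generators serve for $\mathcal{B}$. There is no genuine obstacle: the only thing to verify is the inclusion $\mathcal{A}\subseteq\mathcal{B}$, which is immediate from the definitions, since dropping the requirement that a function be non-measurable can only enlarge the family.
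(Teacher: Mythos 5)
Your proposal is correct and is exactly the intended argument: the paper states the corollary without proof precisely because the family of Theorem~\ref{WeaklySupmeasurableNonmeasurableNonSupmeasurable} is contained in the family of the corollary, and the same free algebra generated by the $G_\xi$ witnesses strong $2^\C$-algebrability of the larger family. The monotonicity observation you record is the whole content of the deduction.
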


\begin{theorem}\label{Non_measurable_sep_measurable}
The family of all non-measurable separately measurable functions is strongly $2^\mathfrak{c}$-algebrable.
\end{theorem}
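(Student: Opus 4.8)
The plan is to mimic the construction in the proof of Theorem \ref{SupMeasurableNonMeasurableNewAssumption}, replacing the graph pieces $f_\alpha$ coming from condition (A) by the sets $A_\alpha$ furnished by Lemma \ref{rodzinaAalfa}; the payoff is that Lemma \ref{rodzinaAalfa} holds in ZFC, so no extra set-theoretic assumption is needed here. First I would fix a bounded measurable set $Y\subseteq\R^2$ of positive measure (say $Y=[0,1]^2$) and apply Lemma \ref{rodzinaAalfa} to obtain a family $\{A_\alpha\colon\alpha<\mathfrak{c}\}$ of pairwise disjoint subsets of $Y$, each of full outer measure in $Y$, whose union has all horizontal and vertical sections of at most one point. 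I would also fix a set $\{h_\xi\colon\xi<2^\mathfrak{c}\}$ of free generators of an algebra in $\R^\mathfrak{c}$.

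Then for $\xi<2^\mathfrak{c}$ I would set
\[
F_\xi(x,y)=\sum_{\alpha<\mathfrak{c}}h_\xi(\alpha)\chi_{A_\alpha}(x,y),
\]
which is well defined because the $A_\alpha$ are pairwise disjoint, so at every point at most one summand is nonzero. For distinct $\xi_1<\dots<\xi_k$ and a polynomial $P$ in $k$ variables without constant term, pairwise disjointness again yields
\[
F\coloneqq P(F_{\xi_1},\dots,F_{\xi_k})=\sum_{\alpha<\mathfrak{c}}P(h_{\xi_1}(\alpha),\dots,h_{\xi_k}(\alpha))\chi_{A_\alpha},
\]
and it remains to verify that $F$ is separately measurable and non-measurable (in particular nonzero).

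Separate measurability is immediate: the support $\{(x,y)\colon F(x,y)\neq 0\}$ is contained in $\bigcup_{\alpha<\mathfrak{c}}A_\alpha$, whose every horizontal and vertical section has at most one point, so each section of $F$ takes a nonzero value on at most one point and is therefore measurable—this is exactly the observation recorded at the end of Section \ref{SectionSeparatelyMeasurable}. For non-measurability I would invoke freeness of $\{h_\xi\}$: the element $P(h_{\xi_1},\dots,h_{\xi_k})$ is a nonzero member of $\R^\mathfrak{c}$, so $c\coloneqq P(h_{\xi_1}(\beta),\dots,h_{\xi_k}(\beta))\neq 0$ for some $\beta<\mathfrak{c}$. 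Since $F$ vanishes off $\bigcup_\alpha A_\alpha$, the level set $F^{-1}(c)$ is the union of those $A_\alpha$ on which the coefficient equals $c$; it contains $A_\beta$, hence has full outer measure in $Y$, while all its vertical sections have at most one point and are thus null. Were $F$ measurable, so would be $F^{-1}(c)$, and Fubini's Theorem would force it to have measure zero, contradicting full outer measure. Therefore $F$ is non-measurable.

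This shows that $\{F_\xi\colon\xi<2^\mathfrak{c}\}$ freely generates an algebra contained, together with $0$, in the family of all non-measurable separately measurable functions, giving strong $2^\mathfrak{c}$-algebrability. I expect the only delicate point to be the non-measurability argument, where one must check that the chosen level set simultaneously inherits full outer measure from $A_\beta$ and null sections from the union $\bigcup_\alpha A_\alpha$; both follow directly from the two clauses of Lemma \ref{rodzinaAalfa}, so there is no genuine obstacle, only the need to apply Fubini to the correctly chosen level set.
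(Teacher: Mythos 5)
Your proposal is correct and follows essentially the same route as the paper's proof: the same generators $F_\xi=\sum_{\alpha<\mathfrak{c}}h_\xi(\alpha)\chi_{A_\alpha}$ built from Lemma \ref{rodzinaAalfa}, the same Fubini argument on the level set containing $A_\beta$, and the same support observation for separate measurability. The only (immaterial) difference is your choice of $Y=[0,1]^2$ where the paper takes $Y=\R^2$.
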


\begin{proof}
Let $\{h_\xi\colon\xi<2^\mathfrak{c}\}$ denote a set of free generators of an algebra in $\R^\mathfrak{c}$. Let $\{A_\alpha\colon\alpha<\mathfrak{c}\}$ be a family described in 
Lemma \ref{rodzinaAalfa} (for $Y=\R^2$).
Let $A=\bigcup_{\alpha<\mathfrak{c}}A_\alpha$. For $\xi<2^\mathfrak{c}$ we define $F_\xi\colon\R^2\to\R$ as follows
\[
F_\xi(x,y)=\sum_{\alpha<\mathfrak{c}}h_\xi(\alpha)\chi_{A_\alpha}(x,y).
\]
For $\xi_1<\xi_2<\dots<\xi_k$ and polynomial $P$ in $k$ variables without constant term we have 
\[
F\coloneqq P(F_{\xi_1},F_{\xi_2},\dots,F_{\xi_k})=\sum_{\alpha<\mathfrak{c}}P(h_{\xi_1}(\alpha),h_{\xi_2}(\alpha),\dots,h_{\xi_k}(\alpha))\chi_{A_\alpha}.
\]
Let us show the $F$ is non-measurable. There exists $\beta<\mathfrak{c}$ such that  $P(h_{\xi_1}(\beta),h_{\xi_2}(\beta),\dots,h_{\xi_k}(\beta))\neq 0$. Then
\[
A_\beta\subseteq F^{-1}(P(h_{\xi_1}(\beta),h_{\xi_2}(\beta),\dots,h_{\xi_k}(\beta)))\subseteq A.
\]


Therefore $F^{-1}(P(h_{\xi_1}(\beta),\dots,h_{\xi_k}(\beta)))$ has positive outer measure (as a superset of $A_\beta$) and null vertical sections (as a subset of $A$), so, by Fubini’s Theorem, it is non-measureable. Consequently, $F$ is non-measurable. In particular, $F$ is non-zero.

Since each vertical and horizontal section of $A$ has at most two elements and $\{(x,y)\colon F(x,y)\neq 0\}\subseteq A$, then, using what we observed in Section \ref{SectionSeparatelyMeasurable}, we get that $F$ is separately measurable. 
\end{proof}

\begin{theorem}\label{TheoremDarbouxSections}
The family of all non-measurable functions $F\colon\R^2\to\R$ whose all vertical and horizontal sections are Darboux Baire one is strongly $\C$-algebrable.
\end{theorem}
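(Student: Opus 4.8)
The plan is to apply the exponential-like criterion recalled in Section~\ref{exponentialLIKE} to a single, carefully chosen seed function coming from Grande's construction in Section~\ref{sekcjaGrande}. Let $\mathcal{A}$ denote the family of all non-measurable $F\colon\R^2\to\R$ whose vertical and horizontal sections are all Darboux Baire one. Fix the data $C,B,g,X$ of Grande's construction and choose the auxiliary function $h\colon X\to(0,1]$ to be \emph{injective}; this is possible since $|X|=\C=|(0,1]|$. Let $F\coloneqq F_h$ be the resulting function. Section~\ref{sekcjaGrande} already guarantees that $F\in\mathcal{A}$, so it suffices to show that $f\circ F\in\mathcal{A}$ for every exponential like $f$; strong $\C$-algebrability then follows at once from the criterion.

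First I would check that the sections of $f\circ F$ remain Darboux Baire one. Every horizontal section of $f\circ F$ at $y$ equals $f\circ F(\cdot,y)$, and likewise for vertical sections, so each section of $f\circ F$ is $f$ composed with a Darboux Baire one section of $F$. Since $f$ is continuous, it maps intervals onto intervals, so $f$ composed with a Darboux function is Darboux; and $f$ composed with a pointwise limit of continuous functions is again such a limit, so the Baire one property is preserved. Hence all sections of $f\circ F$ are Darboux Baire one.

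The main point, and the only real obstacle, is to verify that $f\circ F$ is non-measurable. Recall from Section~\ref{sekcjaGrande} that on $C\times C$ the function $F$ vanishes exactly on $(C\times C)\setminus X$ and takes the value $h(x,y)\in(0,1]$ on $X$. Put $c\coloneqq f(0)$. Because $f$ is exponential like, $f-c$ is a nonzero exponential sum with real frequencies and therefore has only finitely many real zeros; write $f^{-1}(c)=\{0,t_1,\dots,t_m\}$. Then, setting $S\coloneqq\{(x,y)\in X\colon h(x,y)\in\{t_1,\dots,t_m\}\}$, we have
\[
(f\circ F)^{-1}(c)\cap(C\times C)=\bigl[(C\times C)\setminus X\bigr]\cup S=(C\times C)\setminus(X\setminus S).
\]
Since $h$ is injective, $S$ has at most $m$ elements, so it is finite; hence $X\setminus S$ is non-measurable, being a non-measurable set with a finite set removed. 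As $C\times C$ is measurable, the displayed set is non-measurable, whence $f\circ F$ is non-measurable and, in particular, non-zero.

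Having established that $f\circ F\in\mathcal{A}\setminus\{0\}$ for every exponential like $f$, the criterion of Section~\ref{exponentialLIKE} yields that $\mathcal{A}$ is strongly $\C$-algebrable. I expect the crux to be precisely the non-measurability step: the difficulty is that exponential like functions need not be injective, so $f^{-1}(c)$ may contain points other than $0$ that could, a priori, pull back a large portion of $X$ into the level set $(f\circ F)^{-1}(c)$ and destroy non-measurability. Choosing $h$ injective, combined with the finiteness of $f^{-1}(c)$, confines this extra contribution to the finite set $S$ and preserves non-measurability uniformly over all exponential like $f$.
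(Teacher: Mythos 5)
Your proof is correct, and it follows the same overall architecture as the paper's: Grande's construction from Section \ref{sekcjaGrande} as the seed, the exponential-like criterion from Section \ref{exponentialLIKE}, and the observation that composition with a continuous $f$ preserves Darboux Baire one sections. Where you genuinely diverge is in the choice of $h$ and in the non-measurability argument, which is indeed the crux. The paper takes $h=\sum_{\alpha<\C}r_\alpha\chi_{A_\alpha}$ with $\{A_\alpha\colon\alpha<\C\}$ the family from Lemma \ref{rodzinaAalfa} for $Y=B\times B$ and $\{r_\alpha\}$ a one-to-one enumeration of $(0,1]$; it then picks $\beta$ with $f(r_\beta)\neq f(0)$ (possible because an exponential like function is non-constant on intervals) and concludes that $(f\circ F_h)^{-1}(f(r_\beta))\cap (B\times B)$ is squeezed between $A_\beta$ (full outer measure) and $\bigcup_{\alpha<\C}A_\alpha$ (all sections of size at most one), hence non-measurable by Fubini. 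You instead take $h$ injective and examine the level set of $f(0)$, showing it differs from $(C\times C)\setminus X$ only by the finite set $S$; this avoids Fubini entirely and reduces non-measurability of $f\circ F_h$ directly to the non-measurability of $X$ built into Grande's construction. Your route trades the paper's ``non-constant on intervals'' input for the stronger (but still classical) fact that a real exponential sum $f-c$ has only finitely many real zeros; note that countability of the zero set, which already follows from the identity theorem, would suffice, since deleting a countable subset from a non-measurable set leaves it non-measurable. Both arguments are sound; yours is somewhat more self-contained at the level-set stage, while the paper's reuses the full-outer-measure-plus-small-sections Fubini pattern that recurs in its other proofs.
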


\begin{proof}
Here we follow the notation from Section \ref{sekcjaGrande}. Let $\{A_\alpha\colon \alpha<\C\}$ be a family described in Lemma \ref{rodzinaAalfa} for $Y=B\times B$ Let $h\colon X\to(0,1]$ be defined as follows
\[
h(x, y)=\sum_{\alpha<\C} r_\alpha\chi_{A_\alpha},
\]
where $\{r_\alpha\colon \alpha<\C\}$ is a one-to-one enumeration of $(0, 1]$.
We will show that the composition $f\circ F_h$ with any exponential like function $f$ is a non-measurable function with Darboux Baire one sections, which implies strong $\C$-algebrability of the considered family (see Section \ref{exponentialLIKE}). 

Indeed, let $y\in \R$ and $f$ be any exponential like function. Note that 
\[
(f\circ F_h)(\cdot, y)=f\circ (F_h(\cdot, y)).
\]
Therefore $(f\circ F_h)(\cdot, y)$ is a Darboux Baire one as a composition of $F_h(\cdot, y)$ with a continuous function. Similarly for vertical sections. 

Now choose $\beta<\C$ such that $f(r_\beta)\neq f(0)$. This can be done because an exponential like function is not constant on any open interval, by the identity theorem for analytic functions. Then $A_\beta\subseteq (f\circ F_h)^{-1}(f(r_\beta))\cap B\times B \subseteq \bigcup_{\alpha<\C} A_\alpha$, so $(f\circ F_h)^{-1}(f(r_\beta))\cap B\times B$ has full outer measure (in $B\times B)$ and 
null sections. According to Fubini's Theorem, this set is non-measurable. So $f\circ F_h$ is non-measurable.
\end{proof}


\begin{theorem}\label{Non_Measurable_all_vert_sec_app_cont_and_all_hor_sec_meas}
Assume $\operatorname{cov}(\mathcal{N})=\operatorname{add}(\mathcal{N})$. Then the family of all non-measurable functions having all vertical sections approximately continuous and all horizontal sections measurable, is strongly $2^\mathfrak{c}$-algebrable. 
\end{theorem}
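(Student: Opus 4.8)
The plan is to follow the template of Theorems \ref{SupMeasurableNonMeasurableNewAssumption}--\ref{Non_measurable_sep_measurable}: encode the $2^\C$ free generators $\{h_\xi\colon\xi<2^\C\}$ of an algebra in $\R^\C$ as values attached to a partition of the $x$-axis, while using the machinery of Section \ref{Appr_Cont_Section} on the $y$-axis to force the vertical sections to be approximately continuous. Write $\kappa=\operatorname{cov}(\mathcal{N})=\operatorname{add}(\mathcal{N})$. First I fix an increasing family of null sets $\{N_\alpha\colon\alpha<\kappa\}$ with $\bigcup_{\alpha<\kappa}N_\alpha=\R$, which exists since $\operatorname{cov}(\mathcal{N})=\kappa$ (pass to initial unions, null by $\operatorname{add}(\mathcal{N})=\kappa$). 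Recursively, using the construction recalled in Section \ref{Appr_Cont_Section}, I choose approximately continuous $g_\alpha\colon\R\to[0,1]$ so that $Z_\alpha\coloneqq g_\alpha^{-1}(0)$ is a null set containing $N_\alpha\cup\bigcup_{\beta<\alpha}Z_\beta$ (null by $\operatorname{add}(\mathcal{N})=\kappa$) and $g_\alpha^{-1}(1)$ has positive measure; thus the $Z_\alpha$ increase with $\alpha$ and still cover $\R$. For $x\in\R$ put $\rho(x)=\min\{\alpha\colon x\in N_\alpha\}$. Finally I fix a partition $\{R_\beta\colon\beta<\C\}$ of $\R$ into sets of full outer measure (for instance the fibres of an almost perfectly everywhere surjective function, which are Bernstein sets, see Section \ref{SectionAPESnowa}) and let $\sigma(x)=\beta$ for $x\in R_\beta$.

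For $\xi<2^\C$ I set $F_\xi(x,y)=h_\xi(\sigma(x))\,g_{\rho(x)}(y)$. For distinct $\xi_1,\dots,\xi_k$ and a polynomial $P$ in $k$ variables without constant term, a pointwise computation gives
\[
F\coloneqq P(F_{\xi_1},\dots,F_{\xi_k})(x,y)=\tilde{Q}_x\bigl(g_{\rho(x)}(y)\bigr),
\]
where $\tilde{Q}_x$ is a one-variable polynomial with $\tilde{Q}_x(0)=0$ and $\tilde{Q}_x(1)=P(h_{\xi_1}(\sigma(x)),\dots,h_{\xi_k}(\sigma(x)))$. The section properties are then immediate. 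For fixed $x$ the vertical section $y\mapsto\tilde{Q}_x(g_{\rho(x)}(y))$ is approximately continuous, being the composition of the approximately continuous $g_{\rho(x)}$ with the continuous polynomial $\tilde{Q}_x$. For fixed $y$ the horizontal section $x\mapsto\tilde{Q}_x(g_{\rho(x)}(y))$ vanishes whenever $g_{\rho(x)}(y)=0$, i.e. whenever $y\in Z_{\rho(x)}$; writing $\delta(y)$ for the least index with $y\in Z_{\delta(y)}$, this fails only for $x$ with $\rho(x)<\delta(y)$, a set contained in $\bigcup_{\gamma<\delta(y)}N_\gamma$, which is null by $\operatorname{add}(\mathcal{N})=\kappa$. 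Hence every horizontal section is $0$ almost everywhere, so measurable.

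The crux is non-measurability of $F$ whenever $P\neq0$. Since the $h_\xi$ are free generators in $\R^\C$, the element $P(h_{\xi_1},\dots,h_{\xi_k})\in\R^\C$ is non-zero, so there is $\beta_0<\C$ with $p_0\coloneqq P(h_{\xi_1}(\beta_0),\dots,h_{\xi_k}(\beta_0))\neq0$. Consider the level set $F^{-1}(p_0)$. As $\tilde{Q}_x(0)=0$, its horizontal section at each $y$ is contained in the null set $\{x\colon g_{\rho(x)}(y)\neq0\}\subseteq\bigcup_{\gamma<\delta(y)}N_\gamma$ computed above; hence if $F^{-1}(p_0)$ were measurable, Fubini's theorem would force it to be null. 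On the other hand $\tilde{Q}_x(1)=p_0$ for every $x\in R_{\beta_0}$, so
\[
W\coloneqq\{(x,y)\colon x\in R_{\beta_0},\ g_{\rho(x)}(y)=1\}\subseteq F^{-1}(p_0),
\]
and $W$ has positive outer measure: its vertical section over each $x\in R_{\beta_0}$ is $g_{\rho(x)}^{-1}(1)$, of positive measure, while $R_{\beta_0}$ has full outer measure, so $W$ cannot be contained in a null set (a null set has almost all vertical sections null, which would force $R_{\beta_0}$ to be null). This contradicts $F^{-1}(p_0)$ being null, so $F$ is non-measurable; in particular $F\neq0$. Thus $\{F_\xi\colon\xi<2^\C\}$ freely generates an algebra contained, together with $0$, in the family under consideration, establishing strong $2^\C$-algebrability.

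The main obstacle I anticipate is the non-measurability step, and specifically the outer-measure bookkeeping: one must arrange the bumps $g_\alpha$ so that their zero sets $Z_\alpha$ increase with $\alpha$ — exactly where $\operatorname{add}(\mathcal{N})=\kappa$ enters, through the recursion — while keeping each $g_\alpha^{-1}(1)$ of positive measure, and then run the Fubini argument for the possibly non-measurable witness set $W$ by contraposition rather than directly. The verifications that composition with $\tilde{Q}_x$ preserves approximate continuity and that the horizontal sections vanish almost everywhere are routine once the covering $\{N_\alpha\}$ and the partition $\{R_\beta\}$ are in place.
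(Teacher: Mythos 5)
Your proposal is correct and follows essentially the same route as the paper: a null cover of $\R$ indexed by $\kappa=\operatorname{cov}(\mathcal{N})=\operatorname{add}(\mathcal{N})$, approximately continuous bumps $g_\alpha$ vanishing on the initial unions of the cover, and a partition of the $x$-axis into Bernstein sets carrying the free generators $h_\xi$ of an algebra in $\R^\C$. The only (harmless) deviation is the non-measurability step: the paper compares the two iterated integrals of $F$ itself (which is why it insists that $\R\setminus g_\alpha^{-1}(1)$ have measure less than $1$, so that the inner integral diverges on a Bernstein set), whereas you run Fubini on the level set $F^{-1}(p_0)$, which only needs $g_\alpha^{-1}(1)$ of positive measure and mirrors the argument already used in Theorems \ref{SupMeasurableNonMeasurableNewAssumption} and \ref{Non_measurable_sep_measurable}.
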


\begin{proof}
Let $\kappa=\operatorname{cov}(\mathcal{N})=\operatorname{add}(\mathcal{N})$, and let $\mathbb{R}=\bigcup_{\alpha<\kappa} C_\alpha$, where $C_\alpha$ are null sets. For every $\alpha<\kappa$, the set $D_\alpha\coloneqq\bigcup_{\beta \leq \alpha} C_\beta$ has measure zero. There is an approximately continuous $g_\alpha\colon \mathbb{R} \rightarrow[0,1]$ such that $g_\alpha^{-1}(0)$ is a null cover of $D_\alpha$ and $\R\setminus g_\alpha^{-1}(1)$ has measure less than 1 (see Section \ref{Appr_Cont_Section}). By $\{h_\xi\colon\xi<2^\mathfrak{c}\}$ we denote a set of free generators of an algebra in 
$\R^\C$.
Let $\{B_\alpha\colon\alpha<\mathfrak{c}\}$ be a family of pairwise disjoint Bernstein sets (see Section \ref{SectionAPESnowa}). For each $r \in \mathbb{R}$ let $\alpha(r)$ denote the first ordinal $\alpha$ with $r \in C_\alpha$. We define
\[
F_\xi(x, y)= g_{\alpha(x)}(y)\sum_{\beta<\mathfrak{c}}h_\xi(\beta)\chi_{B_\beta}(x) .
\]
%
%
Fix $x\in \R$. If $x\notin \bigcup_{\beta<\C} B_\beta$, then $F_\xi(x, \cdot)$ is approximately continuous as a constant zero function. If $x\in B_\beta$ for some $\beta<\C$, then $F_\xi(x, \cdot)=h_\xi(\beta)g_{\alpha(x)}$, so $F_\xi(x, \cdot)$ is approximately continuous.
Fix $y\in\R$ and assume that $x\notin\bigcup_{\beta<\alpha(y)}C_\beta$. Then $\alpha(x)\geq\alpha(y)$ and $g_{\alpha(x)}$ vanishes at 
\[
D_{\alpha(x)}=\bigcup_{\beta\leq\alpha(x)}C_\beta\supseteq \bigcup_{\beta\leq\alpha(y)}C_\beta\supseteq C_{\alpha(y)}\ni y.
\]
So $F(\cdot, y)=0$ almost everywhere. 
For $\xi_1<\xi_2<\dots<\xi_k$ and polynomial $P$ in $k$ variables without constant term, let $F= P(F_{\xi_1},F_{\xi_2},\dots,F_{\xi_k}).$ Since the sum and the product of two approximately continuous functions is approximately continuous, then, by simple induction, $F(x, \cdot)$ is approximately continuous. 

Since $F(\cdot, y)=0$ almost everywhere for every $y\in\R$, then 
\[
\int_{\mathbb{R}} \left(\int_{\mathbb{R}} F(x, y) \mathrm{d} x\right) \mathrm{d} y=0.
\]
There exists $\beta<\mathfrak{c}$ such that $P(h_{\xi_1}(\beta),h_{\xi_2}(\beta),\dots,h_{\xi_k}(\beta))\neq 0$. For each $x\in B_\beta$ we have
\[
\int_{\mathbb{R}} F(x, y) \mathrm{d} y=\int_{\mathbb{R}\setminus (g_{\alpha(x)})^{-1}(1)} F(x, y) \mathrm{d} y+\int_{(g_{\alpha(x)})^{-1}(1)} P(h_{\xi_1}(\beta),h_{\xi_2}(\beta),\dots,h_{\xi_k}(\beta)) \mathrm{d} y.
\]

Note that the absolute value of the first integral is not greater than 
\[\max\{\vert P(h_{\xi_1}(\beta)g_{\alpha(x)}(y),h_{\xi_2}(\beta)g_{\alpha(x)}(y),\dots,h_{\xi_k}(\beta)g_{\alpha(x)}(y))\vert\colon 0\leq g_{\alpha(x)}(y)\leq 1\}
\]
\[
\leq\max\{\vert P(h_{\xi_1}(\beta)t,h_{\xi_2}(\beta)t,\dots,h_{\xi_k}(\beta)t)\vert\colon 0\leq t\leq 1\}
\]
while the second integral is infinite and has  the same sign as $P(h_{\xi_1}(\beta),h_{\xi_2}(\beta),\dots,h_{\xi_k}(\beta))$. So $\int_{\mathbb{R}} F(x, y) \mathrm{d} y$ is infinite for $x$'s from the Bernstein set $B_\beta$. Therefore, the iterated integral $\int_{\mathbb{R}} \left(\int_{\mathbb{R}} F(x, y) \mathrm{d} y\right) \mathrm{d} x$ is not zero, and, according to Fubini's Theorem, $F$ is non-measurable.

\end{proof}

\begin{theorem}\label{Jonesy}
    The family of sup-Jones functions is $2^\C$-lineable.
\end{theorem}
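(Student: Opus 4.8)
The plan is to realize every nonzero member of the exhibited space as a composition of a \emph{fixed} one‑variable Jones function with a planar ``switch'' set that separates the two composition directions. First I record the convenient reformulation coming from the equivalent description in Section \ref{Jones_Secion}: a function $F\colon\R^2\to\R$ is sup‑Jones if and only if for every continuous $f\colon\R\to\R$, every perfect $P\subseteq\R$ and every continuous $g\colon P\to\R$ there are $x,x'\in P$ with $F(x,f(x))=g(x)$ and $F(f(x'),x')=g(x')$. There are exactly $\C$ such data $(f,P,g)$, which is what makes a recursion of length $\C$ feasible.

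First I would fix a $2^\C$‑dimensional linear space $\mathcal{J}\subseteq\R^\R$ \emph{all of whose nonzero elements are Jones functions} (i.e.\ I would use the $2^\C$‑lineability of the class of Jones functions). A witnessing family $\{\phi_\xi\colon\xi<2^\C\}$ can be produced by the one‑variable transfinite recursion in which, for each pair $(P,g)$ listed with multiplicity $\C$, one reserves a fresh point of $P$ and injects the $2^\C$ degrees of freedom through a set of free generators of $\R^\C$ as in \cite{BGP}; here the recursion is easy because in one variable the reserved witnesses are single points and can be kept distinct in Bernstein fashion. Note that the $\mathcal{APES}$ family of Section \ref{SectionAPESnowa} does \emph{not} suffice for $\mathcal{J}$: an almost perfectly everywhere surjective function is onto its range on every perfect set, but this does not force it to meet an arbitrary continuous target $g$.

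The heart of the construction is a set $D\subseteq\R^2$ with two ``graph‑Bernstein'' properties: for every continuous $f$ and every perfect $P$,
\[
\text{(a) } \{x\in P\colon (x,f(x))\in D\}\text{ contains a perfect set},\qquad
\text{(b) } \{x\in P\colon (f(x),x)\notin D\}\text{ contains a perfect set}.
\]
Granting such a $D$, I would set, for $\phi\in\mathcal{J}$,
\[
F_\phi(x,y)=
\begin{cases*}
\phi(x) & if $(x,y)\in D$,\\
\phi(y) & if $(x,y)\notin D$,
\end{cases*}
\]
which is linear in $\phi$. For a nonzero $\phi\in\mathcal{J}$ (hence a Jones function) and data $(f,P,g)$: on a perfect $Q\subseteq P$ supplied by (a) one has $F_\phi(x,f(x))=\phi(x)$, and since $\phi$ meets $g\upharpoonright Q$ one gets the first witness; on a perfect $Q'\subseteq P$ supplied by (b) one has $F_\phi(f(x),x)=\phi(x)$, giving the second. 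Thus every nonzero $F_\phi$ is sup‑Jones, and $\phi\mapsto F_\phi$ is a linear injection of $\mathcal{J}$ into the sup‑Jones functions (injectivity follows once $D$ and its complement are arranged to have full projections), producing the desired $2^\C$‑dimensional space.

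The step I expect to be the main obstacle is the construction of $D$. I would build it by transfinite recursion of length $\C$ over an enumeration $\{(f_\gamma,P_\gamma)\colon\gamma<\C\}$, at stage $\gamma$ committing a perfect piece of the graph of $f_\gamma$ into $D$ (to secure (a)) and a perfect piece of its reflection across the diagonal into the complement of $D$ (to secure (b)). The delicate point is \emph{consistency}: no plane point may be demanded simultaneously inside and outside $D$. A graph and a transpose‑graph can overlap on a perfect set — most strikingly when $f_\delta\circ f_\gamma=\Id$, so that the two curves coincide — so the points already committed can be ``perfectly large'' along the current curve and a naive choice of witnesses collapses. I expect to resolve this by a fusion/general‑position argument: one maintains the invariant that the committed part meets every graph and every transpose‑graph in a set of size $<\C$, and at each stage selects the new perfect pieces inside the current curve with the fewer‑than‑$\C$ already‑committed curves deleted, exploiting that two non‑identical graphs of continuous functions meet in a nowhere dense closed set while same‑function tasks are separated by disjoint perfect subsets. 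Making this bookkeeping precise — in particular taming the $f_\delta\circ f_\gamma=\Id$ coincidences — is the crux of the whole proof.
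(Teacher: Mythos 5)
Your overall architecture is genuinely different from the paper's: you try to factor the problem through (i) a $2^\C$-dimensional space $\mathcal{J}$ of one-variable Jones functions and (ii) a ``switching'' set $D\subseteq\R^2$, whereas the paper builds the family $\{F_U\colon U\in\beta\w\}$ directly by one transfinite recursion over tuples $(g,f,l,p)$ consisting of a target, a continuous function, a linear functional and a partition of $\w$, using ultrafilter limits to encode $2^\C$ degrees of freedom in $\C$ stages and making each $F_U$ symmetric at its witness points so that both superpositions $F_f$ and $F^f$ are handled at once. Your reduction of sup-Jonesness to ``$\phi$ Jones plus properties (a),(b) of $D$'' is logically sound, and ingredient (i) is a known result (whose standard proof is essentially the paper's ultrafilter argument run in one variable), so the approach would be a legitimate alternative \emph{if} $D$ existed.

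However, the construction of $D$ --- which you yourself identify as the crux --- is a genuine gap, and the strategy you sketch for it does not work as stated. The invariant you propose, that ``the committed part meets every graph and every transpose-graph in a set of size $<\C$,'' is violated at the very first stage: the piece you commit into $D$ at stage $\gamma$ is a perfect (hence size-$\C$) subset of the graph of $f_\gamma$, and it also sits inside the graph of every continuous function agreeing with $f_\gamma$ on its projection, as well as inside the transpose-graph of any partial inverse. Likewise, the claim that two non-identical graphs of continuous functions meet in a nowhere dense closed set is false: two distinct continuous functions can coincide on an interval. More seriously, the recursion needs \emph{global} consistency --- the perfect piece committed to the complement of $D$ at an early stage must still leave room, inside every later curve it happens to contain or almost contain, for a perfect piece to be committed to $D$ --- and since $<\C$ closed sets can cover a perfect set when $\operatorname{cov}(\mathcal{M})<\C$, neither a cardinality count nor a Baire-category argument closes this in ZFC. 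Without a proof that such a $D$ exists (and it is not obviously a ZFC object), the argument is incomplete; the paper's construction sidesteps the issue entirely by supporting each $F_U$ on a thin, explicitly chosen set of symmetric witness points rather than on a large set $D$.
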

\begin{proof}
Let $\mathcal{L}^n$ be the family of all linear functionals defined on $\R^n$. 
Let 
\[
\mathcal{L}= \bigcup_{n\geq 1} \mathcal{L}^n \times n^{\w}.
\]
Note that the cardinality of $\Li$ is continuum. Let $\mathcal{K}$ be a family of all partial real continuous functions with perfect domain and $\mathcal{F}$ be a family of all continuous functions.
Note that the cardinality of $\mathcal{K} \times \mathcal{F} \times \Li$ is continuum.
Let $\mathcal{K} \times \mathcal{F} \times \Li =\{(g_{\alpha}, f_{\alpha}, l_{\alpha}, p_{\alpha})\colon\alpha < \C\}.$
Formally we should write $(g_{\alpha}, f_{\alpha}, (l_{\alpha}, p_{\alpha}))$ but we omit the inner parentheses for clarity.
For each $\alpha < \C$, let $K_{\alpha}$ be the domain of $g_{\alpha}$ and let $x_{\alpha} \in K_\alpha \setminus \left(\{f_{\xi}(x_{\xi})\colon \xi<\alpha\} \cup \{x_{\xi}\colon\xi< \alpha\}\right)$.
For an element $l_\alpha$ in $\Li^n$, we find $\overrightarrow{x_{\alpha}} \in \R^n$ such that $l_{\alpha}(\overrightarrow{x_{\alpha}})=g_{\alpha}(x_{\alpha})$.
Note that $p_{\alpha} \in n^\w$ is continuous (as a mapping between two discrete spaces), so we can consider its continuous extension $\overline{p_{\alpha}}\colon\beta \w \to n$.


For $U \in \beta \w$ we define a function $F_U\colon\R^2 \to \R$ in the following way:
\[
F_U(x_{\alpha}, f_{\alpha}(x_{\alpha}))=F_U(f_{\alpha}(x_{\alpha}), x_{\alpha})=\overrightarrow{x_{\alpha}} \circ \overline{p_{\alpha}}(U)
\]
for $\alpha < \C$, and $F_U$ takes $0$ at other points.



Let $n\geq 1$ and take a continuous $f\colon \R \to \R$, $g \in \mathcal{K}$, $l\in \Li^n$ and distinct $U_0, U_1, \dots, U_{n-1} \in \beta \w$.
We can find a partition $\{u_0, u_1, \dots, u_{n-1}\}$  of $\w$ such that $u_i \in U_i$ for $i=0, 1, \dots, n-1$. We define a function $p\colon\w \to n$ by the formula $p(k)=i \iff k \in u_i$ for $i=0, 1, \dots, n-1$. Take $\alpha < \C$ such that $g_\alpha=g, f_{\alpha}=f, l_{\alpha}=l, p_{\alpha}=p$. Then $\overline{p_{\alpha}}(U_i)=i$ for $i=0, 1, \dots, n-1$ (Lemma \ref{lematOrozszerzaniu}). Therefore
\[l(F_{U_0}, F_{U_1}, \dots, F_{U_{n-1}})(x_{\alpha}, f_{\alpha}(x_{\alpha}))=l(F_{U_0}, F_{U_1}, \dots, F_{U_{n-1}})(f_{\alpha}(x_{\alpha}), x_{\alpha})
\]
\[=
l_{\alpha}(F_{U_0}(x_{\alpha}, f_{\alpha}(x_{\alpha})), \dots, F_{U_{n-1}}(x_{\alpha}, f_{\alpha}(x_{\alpha})))=l_{\alpha}(\overrightarrow{x_{\alpha}} \circ \overline{p_{\alpha}}(U_0), \dots, \overrightarrow{x_{\alpha}} \circ \overline{p_{\alpha}}(U_{n-1}))
\]
\[=
l_{\alpha}(\overrightarrow{x_{\alpha}}(0), \dots, \overrightarrow{x_{\alpha}}(n-1))=l_{\alpha}(\overrightarrow{x_{\alpha}}) = g_{\alpha}(x_{\alpha}).
\]
So $l(F_{U_0}, \dots, F_{U_{n-1}})$ is sup-Jones. This proves that sup-Jones functions are $2^{\C}$-lineable (as $\vert \beta \w \vert = 2^\C$, see e.g. \cite{engelking}).
\end{proof}
    
Note that if $F$ is sup-Jones, then $F^2$ is not. Therefore the family of all sup-Jones functions is not $1$-algebrable.
We can modify the definition of sup-Jones functions to obtain strong $2^\C$-algebrability.
We say that $F\colon\R^2 \to \R$ is \textit{symmetric sup-Jones} function if for every continuous function $f\colon\R \to \R$ and every continuous real valued function $g$ defined on a perfect subset of $\R$, there exist 
$x, y$ in the domain of $g$
such that
$F(x, f(x)) \in \{g(x), -g(x)\}$ and $F(f(y), y) \in \{g(y), -g(y)\}$.
By replacing linear mappings by polynomials without constant terms and sup-Jones functions by symmetric sup-Jones functions in the proof of Theorem \ref{Jonesy}, we obtain the proof of the strong $2^\C$-algebrability of the family of all symmetric sup-Jones functions.

\bibliographystyle{abbrv}
\bibliography{bibliography}

\end{document}